\title[Parabolic subgroups inside parabolic subgroups of Artin groups]{Parabolic subgroups inside parabolic subgroups of Artin groups}
\author[M\,A Blufstein]{Mart\'in\,A Blufstein}
\address{Mart\'in\,A Blufstein, Departamento de Matem\'atica - IMAS, FCEyN, Universidad de Buenos Aires, Buenos Aires, Argentina}
\email{mblufstein@dm.uba.ar}
\author[L Paris]{Luis Paris}
\address{Luis Paris, IMB, UMR 5584, CNRS, Universit\'e Bourgogne Franche-Comt\'e, 21000 Dijon, France}
\email{lparis@u-bourgogne.fr}
\newtheorem{thm}{Theorem}[section]
\newtheorem{lem}[thm]{Lemma}
\newtheorem{prop}[thm]{Proposition}
\theoremstyle{definition}
\newtheorem*{acknow}{Acknowledgments}
\numberwithin{equation}{section}
\renewcommand{\thefigure}{\ifnum \c@section>\z@ \thesection.\fi
 \@arabic\c@figure}
\begin{document}

\def\Prod{{\rm Prod}} \def\N{\mathbb N} \def\CA{{\rm CA}}
\def\Sal{{\rm Sal}} \def\SSal{\overline{\Sal}} \def\id{{\rm id}}


\begin{abstract}
We prove that a parabolic subgroup $P$ contained in another parabolic subgroup $P'$ of an Artin group $A$ is a parabolic subgroup of $P'$.
This answers a question of Godelle which is not obvious despite appearances.
In order to achieve our result we construct a set-retraction $A \to P$ of the inclusion map from a parabolic subgroup $P$ into $A$.
This retraction was implicitly constructed in a previous paper by Charney and the second author.
\end{abstract}

\maketitle


\section{Introduction}\label{sec1}

If $a,b$ are two letters and $m$ is an integer greater or equal to $2$, then we denote by $\Prod(a,b,m)$ the alternating word $aba \cdots$ of length $m$.
We take a finite simplicial graph $\Gamma$ and we denote by $V(\Gamma)$ its set of vertices and by $E(\Gamma)$ its set of edges. 
We endow $E(\Gamma)$ with a labeling $m : E(\Gamma) \to \N_{\ge 2}$ and we take an abstract set $\Sigma=\{ \sigma_x \mid x \in V(\Gamma)\}$ in one-to-one correspondence with $V(\Gamma)$.
Then the \emph{Artin group} $A=A[\Gamma]$ of $\Gamma$ is defined by the presentation 
\[
A = \langle \Sigma \mid \Prod(\sigma_x,\sigma_y,m(e)) = \Prod (\sigma_y, \sigma_x, m(e)) \text{ for } e=\{x,y\} \in E(\Gamma) \rangle\,.
\]

Let $X$ be a subset of $V(\Gamma)$.
We denote by $\Gamma_X$ the full subgraph of $\Gamma$ spanned by $X$ and we endow $E(\Gamma_X)$ with the labeling induced by that of $E(\Gamma)$.
We set $\Sigma_X = \{ \sigma_x \mid x \in X\}$ and we denote by $A_X$ the subgroup of $A$ generated by $\Sigma_X$.
We know by van der Lek \cite{Lek1} that $A_X$ is naturally isomorphic to $A[\Gamma_X]$, hence we will not differentiate $A_X$ from $A[\Gamma_X]$. 
The subgroup $A_X$ is called a \emph{standard parabolic subgroup} of $A$ and a subgroup conjugate to $A_X$ is called a \emph{parabolic subgroup} of $A$.

An important question in the study of Artin groups is to determine whether the intersection of two parabolic subgroups is a parabolic subgroup.
This question is solved for right angled Artin groups by Duncan--Kazachkov--Remeslennikov \cite{DuKaRe1}, for Artin groups of spherical type by Cumplido--Gebhardt--Gonz\'alez-Meneses--Wiest \cite{CGGW1}, for Artin groups of large type by Cumplido--Martin--Vaskou \cite{CuMaVa1}, and for some two dimensional Artin groups by the first author \cite{Blufs1}.
It is also partially solved when the Artin group is of FC type by Morris-Wright \cite{Morri1} (see also M\"oller--Paris--Varghese \cite{MoPaVa1}).

In this paper we prove that a parabolic subgroup $P$ of $A$ contained in another parabolic subgroup $P'$ is a parabolic subgroup of $P'$.
We do this for all Artin groups.
Results proved for all Artin groups are quite uncommon in the literature.
In general, they involve only certain families of Artin groups, so our paper is in some sense a rarity.
This result is a preliminary to the above question, and it was a question posed by Godelle \cite[Conjecture 2]{Godel1}. 
Additionally, it is a central step towards solving the conjugacy stability problem for Artin groups (see \cite{Cump1}).
The question seems obvious but is not. 
It is also related to the study of normalizers and centralizers of parabolic subgroups.
In more precise terms we prove the following. 

\begin{thm}\label{thm1_1}
Let $\Gamma$ be a finite simplicial graph, let $m: E(\Gamma) \to \N_{\ge 2}$ be a labeling, and let $A = A[\Gamma]$ be the Artin group of $\Gamma$. 
Let $X,Y \subset V(\Gamma)$ and $\alpha \in A$ such that $\alpha A_Y \alpha^{-1} \subset A_X$.
Then there exist $Y' \subset X$ and $\gamma \in A_X$ such that $\alpha A_Y \alpha^{-1} = \gamma A_{Y'} \gamma^{-1}$.
\end{thm}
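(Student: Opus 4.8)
The plan is to build a set-theoretic retraction $\pi_X \colon A \to A_X$ of the inclusion $A_X \hookrightarrow A$, with good enough naturality properties that conjugation can be tracked through it. Given such a retraction, the strategy for Theorem~\ref{thm1_1} runs as follows. Starting from $\alpha A_Y \alpha^{-1} \subset A_X$, I would first normalize the conjugating element: replace $\alpha$ by an element of the same coset $A_X \alpha$ (which does not change the conjugate subgroup up to the $A_X$-conjugacy allowed in the conclusion) chosen to be, in an appropriate sense, $X$-reduced. The point of this normalization is that for an $X$-reduced $\alpha$ one expects the relation $\alpha A_Y \alpha^{-1} \subset A_X$ to force a much more rigid combinatorial picture, namely that $\alpha \sigma_y \alpha^{-1}$ already lies in $A_X$ for each $y \in Y$ in a controlled form. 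Then I would apply the retraction $\pi_X$ to the generators $\alpha \sigma_y \alpha^{-1}$ and argue that $\pi_X$ carries this conjugated copy of $A_Y$ to a standard parabolic subgroup $A_{Y'}$ of $A_X$ conjugated by some $\gamma \in A_X$, i.e.\ that $\pi_X$ intertwines the two conjugation actions well enough to transport the parabolic structure.

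Concretely, the first main step is the construction of the retraction. As the abstract indicates, this retraction is implicit in the work of Charney and the second author; I would extract it from the action of $A$ on the Deligne complex (or the associated cube-complex / coset geometry), where $A_X$ is realized as the stabilizer of a vertex $v_X$ and there is a natural nearest-point projection of the whole complex onto the orbit of $v_X$, equivariant under $A_X$. Translating this geometric projection back into the group gives a map $\pi_X \colon A \to A_X$ with $\pi_X|_{A_X} = \id$ and with a controlled behaviour under left multiplication by $A_X$: $\pi_X(g w) = g\,\pi_X(w)$ for $g \in A_X$. The second main step is a combinatorial lemma about $X$-reduced elements: if $\alpha$ is $X$-reduced and $\alpha w \alpha^{-1} \in A_X$ with $w$ a word in $\Sigma$, then in fact $w$ lies in some standard parabolic $A_{Z}$ with $\alpha A_Z \alpha^{-1} \subset A_X$ and $\alpha \sigma_z \alpha^{-1} \in A_X$ for each $z \in Z$; this is where one pays the price of working with arbitrary Artin groups, since one cannot invoke a solution to the word problem or a biautomatic structure. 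Finally, one assembles: set $Y' = $ the image under $\pi_X$ of the generating set $\{\alpha \sigma_y \alpha^{-1}\}$, read off that it is (conjugate in $A_X$ to) a standard generating subset $\Sigma_{Y'}$ with $Y' \subset X$, and check $\alpha A_Y \alpha^{-1} = \gamma A_{Y'} \gamma^{-1}$ by comparing generators.

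The main obstacle I anticipate is the second step — controlling $X$-reduced conjugators. For Coxeter groups the analogous statement is classical and elementary (deletion condition, reflection subgroups), but for Artin groups we lack these tools, and a priori $\alpha A_Y \alpha^{-1} \subset A_X$ only gives an inclusion of subgroups, not any statement about individual conjugates of generators. The retraction $\pi_X$ is exactly what is meant to bridge this gap: because $\pi_X$ is a set-section-compatible retraction, applying it to a subgroup contained in $A_X$ recovers that subgroup, and applying it to $\alpha\sigma_y\alpha^{-1}$ produces a canonical element of $A_X$; the delicate part is to show that these canonical elements still satisfy the defining Artin relations of $A_Y$ (with possibly different exponents collapsing, which is what allows $Y'$ to be a proper subset or to differ combinatorially from $Y$) and that together they generate a \emph{standard} parabolic rather than merely some subgroup. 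I would handle this by a careful induction on $|X|$, peeling off one vertex of $X$ at a time using the decomposition of $A_X$ over its maximal standard parabolics and the compatibility $\pi_X \circ \pi_{X'} = \pi_X$ for $X \subset X'$, so that the general case reduces to the case where $X = V(\Gamma)$, i.e.\ $P' = A$, which is then the heart of the matter.
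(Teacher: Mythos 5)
You have correctly identified the paper's central tool -- a set-retraction $\pi_X : A \to A_X$ of the inclusion, with $\pi_X(g\beta)=g\,\pi_X(\beta)$ for $g\in A_X$ -- but there are two genuine gaps in how you propose to obtain it and to use it. First, the construction: a nearest-point projection onto the orbit of a vertex of the Deligne complex is not available for general Artin groups, since the Deligne complex is not known to be CAT(0) (or otherwise to admit such a projection) outside special families; this would silently restrict your argument to exactly the cases already covered in the literature. The paper instead builds $\pi_X$ combinatorially from the cellular retraction $\rho_X:\Sal(\Gamma)\to\Sal(\Gamma_X)$ of Godelle--Paris on the cover of the Salvetti complex associated with $\theta: A\to W$: given a word $\sigma_{z_1}^{\varepsilon_1}\cdots\sigma_{z_p}^{\varepsilon_p}$, one decomposes each Coxeter-group prefix $u_i=s_{z_1}\cdots s_{z_i}$ as $v_iw_i$ with $v_i\in W_X$ and $w_i$ $(X,\emptyset)$-minimal and records $\sigma_{x}^{\varepsilon_i}$ whenever $w_{i-1}s_{z_i}w_{i-1}^{-1}$ (resp.\ $w_is_{z_i}w_i^{-1}$) equals some $s_x\in S_X$. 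This works for every Artin group because it only uses Coxeter combinatorics plus covering-space theory.

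Second, and more seriously, your ``second main step'' -- a rigidity lemma for $X$-reduced conjugators inside the Artin group, asserting that $\alpha w\alpha^{-1}\in A_X$ forces $w$ into a standard parabolic with each $\alpha\sigma_z\alpha^{-1}\in A_X$ -- is precisely the kind of statement one cannot prove for arbitrary Artin groups, and you rightly flag it as the obstacle without resolving it. The paper's key idea, which is missing from your outline, is to factor the conjugator through the Coxeter group: set $w=\theta(\alpha)$, note that $wW_Yw^{-1}\subset W_X$, and run the ``reduced conjugator'' argument entirely in $W$, where Bourbaki's double-coset lemma shows the $(X,Y)$-minimal representative $w_0$ conjugates $S_Y$ bijectively onto some $S_{Y'}\subset S_X$; the Tits section $\iota$ then transports this to $\iota(w)A_Y\iota(w)^{-1}=\beta_2A_{Y'}\beta_2^{-1}$ with $\beta_2\in A_X$. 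What remains is the colored element $\beta_1=\alpha\,\iota(w)^{-1}\in\CA$, and for that one proves directly from the word-level definition of $\pi_X$ that if $\beta\in\CA$, $a\in A_X$ and $\beta a\beta^{-1}\in A_X$, then $\beta a\beta^{-1}=\pi_X(\beta)\,a\,\pi_X(\beta)^{-1}$. No induction on $|X|$ is needed, and your proposed reduction to the case $X=V(\Gamma)$ goes in the wrong direction: that case is the trivial one ($A_X=A$, take $\gamma=\alpha$, $Y'=Y$), not the heart of the matter.
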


Theorem \ref{thm1_1} was proved in Rolfsen \cite{Rolfs1} and in Fenn--Rolfsen--Zhu \cite{FeRoZh1} for braid groups, in Paris \cite{Paris1} and in Godelle \cite{Godel2} for Artin groups of spherical type, in Godelle \cite{Godel3} for Artin groups of FC type, in Godelle \cite{Godel1} for two dimensional Artin groups and in Haettel \cite{Haett1} for some Euclidean type Artin groups. 
Our proof is independent from these works and it is valid for all Artin groups.

Let $X \subset V(\Gamma)$.
In order to achieve our goal we construct a set-retraction $\pi_X : A \to A_X$ to the inclusion map $A_X \hookrightarrow A$ (see Proposition \ref{prop2_3}).
This map is defined directly on the words that represent the elements of $A$, but it is not a homomorphism, although its restriction to the so-called colored subgroup is a homomorphism. 
The construction of this map is interesting by itself and it can be considered as an important result of the paper.
However, we underline that this construction is implicit in the proof of Theorem 1.2 of Charney--Paris \cite{ChaPar1} and our contribution consists in making it explicit.

\begin{acknow}
The first author is supported by CONICET. The second author is supported by the French project ``AlMaRe'' (ANR-19-CE40-0001-01) of the ANR.
\end{acknow}


\section{Proofs}\label{sec2}

We keep the notations from Section \ref{sec1}.
So, $\Gamma$ is a finite simplicial graph whose set of edges is endowed with a labeling $m: E(\Gamma) \to \N_{\ge 2}$ and $A=A[\Gamma]$ is the Artin group of $\Gamma$.

Let $S=\{s_x \mid x \in V(\Gamma)\}$ be an abstract set in one-to-one correspondence with $V(\Gamma)$.
Then the \emph{Coxeter group} $W=W[\Gamma]$ of $\Gamma$ is defined by the presentation
\begin{gather*}
W = \langle S \mid \Prod(s_x,s_y,m(e)) = \Prod (s_y, s_x, m(e)) \text{ for } e=\{x,y\} \in E(\Gamma)\,,\\
s_x^2=1 \text{ for } x \in V(\Gamma) \rangle\,.
\end{gather*}

Let $X$ be a subset of $V(\Gamma)$.
We set $S_X=\{s_x \mid x \in X\}$ and we denote by $W_X$ the subgroup of $W$ generated by $S_X$.
We know by Bourbaki \cite{Bourb1} that $W_X$ is naturally isomorphic to $W[\Gamma_X]$, hence, as for Artin groups, we will not differentiate $W_X$ from $W[\Gamma_X]$. 
The subgroup $W_X$ is called a \emph{standard parabolic subgroup} of $W$ and a subgroup conjugate to $W_X$ is called a \emph{parabolic subgroup} of $W$.

We denote by $\theta : A \to W$ the natural epimorphism which sends $\sigma_x$ to $s_x$ for all $x\in V(\Gamma)$.
The kernel of $\theta$ is denoted by $\CA = \CA[\Gamma]$ and it is called the \emph{colored Artin group} of $\Gamma$.
The epimorphism $\theta$ has a natural set-section $\iota : W \to A$ defined as follows. 
For $w\in W$ the word length of $w$ with respect to $S$ is denoted by $\ell_S(w)$, and an expression $w=s_{x_1} s_{x_2} \cdots s_{x_p}$ is called \emph{reduced} if $p= \ell_S(w)$.
Let $w \in W$.
We choose a reduced expression $w = s_{x_1} s_{x_2} \cdots s_{x_p}$ and we set $\iota (w) = \sigma_{x_1} \sigma_{x_2} \cdots \sigma_{x_p}$.
By Tits \cite{Tits1} this definition does not depend on the choice of the reduced expression.
Notice that $\iota$ is not a homomorphism, but, if $u,v \in W$ are such that $\ell_S(uv) = \ell_S(u) + \ell_S(v)$, then $\iota(uv) = \iota(u)\,\iota(v)$.
We clearly have $\theta \circ \iota = \id$. 

For $X \subset V(\Gamma)$ we set $\CA_X = \CA \cap A_X$.
Since the inclusion map from $\Gamma_X$ to $\Gamma$ induces isomorphisms $W[\Gamma_X] \to W_X$ and $A[\Gamma_X] \to A_X$, the isomomorphism $A[\Gamma_X] \to A_X$ restricts to an isomorphism $\CA[\Gamma_X] \to \CA_X$.
So, as for $W_X$ and $A_X$, we will not differentiate $\CA_X$ from $\CA[\Gamma_X]$.

The following lemma arises from the exercises of Chapter 4 of Bourbaki \cite{Bourb1} (see also Davis \cite[Section 4.3]{Davis1}) and it is widely used in the study of Coxeter groups.

\begin{lem}[Bourbaki \cite{Bourb1}]\label{lem2_1}
Let $X,Y \subset V(\Gamma)$ and let $w \in W$.
\begin{itemize}
\item[(1)]
There exists a unique element of minimal length in the double-coset $W_X\,w\,W_Y$.
\item[(2)]
Let $w_0$ be the element of minimal length in $W_X\,w\,W_Y$.
For each $v \in W_X\,w\,W_Y$ there exist $u_1 \in W_X$ and $u_2 \in W_Y$ such that $v=u_1 w_0 u_2$ and $\ell_S(v) = \ell_S(u_1) + \ell_S (w_0) + \ell_S(u_2)$.
\item[(3)]
Let $w_0$ be the element of minimal length in $W_X\,w\,W_Y$.
For each $u_1 \in W_X$ we have $\ell_S(u_1w_0) = \ell_S(u_1) + \ell_S(w_0)$, and for each $u_2 \in W_Y$ we have $\ell_S(w_0u_2) = \ell_S(w_0) + \ell_S(u_2)$.
\end{itemize}
\end{lem}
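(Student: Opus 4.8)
The plan is to deduce the whole statement from the classical one‑sided fact about cosets and then bootstrap. I will use throughout the standard companion fact (also going back to Bourbaki \cite{Bourb1}) that for any $Z\subset V(\Gamma)$ the word length on $W_Z$ with respect to $S_Z$ coincides with the restriction of $\ell_S$. \emph{One-sided version.} For $w\in W$ and $Z\subset V(\Gamma)$, the coset $wW_Z$ contains a unique element of minimal length, call it $w_0$, and $\ell_S(w_0u)=\ell_S(w_0)+\ell_S(u)$ for every $u\in W_Z$; the mirror statement holds for cosets of the form $W_Z\,w$. To prove this, pick a minimal-length representative $w_0\in wW_Z$, take a reduced expression $u=s_{y_1}\cdots s_{y_k}$ (whose letters automatically lie in $S_Z$), and prove by induction on $j$ that $\ell_S(w_0s_{y_1}\cdots s_{y_j})=\ell_S(w_0)+j$. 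If appending $s_{y_{j+1}}$ shortened the element, the exchange condition would delete one letter from the reduced word obtained by concatenating a reduced expression of $w_0$ with $s_{y_1}\cdots s_{y_j}$; the deleted letter cannot be one of the $s_{y_i}$, since then the prefix $s_{y_1}\cdots s_{y_{j+1}}$ of a reduced word would fail to be reduced; hence it lies in the $w_0$-part, yielding a representative of $wW_Z$ of length $<\ell_S(w_0)$, contradicting minimality. Additivity makes uniqueness immediate.

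\emph{Part (3).} Let $w_0$ have minimal length in $W_X\,w\,W_Y$. Since $W_X\,w_0\subset W_X\,w_0\,W_Y=W_X\,w\,W_Y$, the element $w_0$ has minimal length in $W_X\,w_0$, so the mirror one-sided statement gives $\ell_S(u_1w_0)=\ell_S(u_1)+\ell_S(w_0)$ for all $u_1\in W_X$, and symmetrically $\ell_S(w_0u_2)=\ell_S(w_0)+\ell_S(u_2)$ for all $u_2\in W_Y$. \emph{Part (2).} Fix this $w_0$ and take $v\in W_X\,w\,W_Y$; among all factorizations $v=u_1w_0u_2$ with $u_1\in W_X$ and $u_2\in W_Y$ choose one minimizing $\ell_S(u_1)+\ell_S(u_2)$, and claim $\ell_S(v)=\ell_S(u_1)+\ell_S(w_0)+\ell_S(u_2)$. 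Otherwise, concatenating reduced words for $u_1$ (letters in $S_X$), $w_0$, and $u_2$ (letters in $S_Y$) gives a non-reduced word representing $v$, so the deletion condition removes two of its letters without changing the element, and a case analysis on their positions leads to a contradiction in each case: two letters in the $u_1$-block (or the $u_2$-block) contradicts $u_1$ (resp.\ $u_2$) being reduced in $W_X$ (resp.\ $W_Y$); two letters in the $w_0$-block contradicts $\ell_S(w_0)$ being its word length; one letter in the $u_1$-block and one in the $w_0$-block, or symmetrically one in the $w_0$-block and one in the $u_2$-block, produces an element of $W_X\,w\,W_Y$ of length $<\ell_S(w_0)$, contradicting minimality; and one letter in the $u_1$-block and one in the $u_2$-block produces a factorization $v=u_1'w_0u_2'$ with $\ell_S(u_1')+\ell_S(u_2')<\ell_S(u_1)+\ell_S(u_2)$, contradicting our choice.

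\emph{Part (1).} A minimal-length element of $W_X\,w\,W_Y$ exists since lengths are non-negative integers; if $w_0$ and $w_0'$ both attain the minimum, then applying part (2) to $v=w_0'$ gives $w_0'=u_1w_0u_2$ with $\ell_S(w_0')=\ell_S(u_1)+\ell_S(w_0)+\ell_S(u_2)\ge\ell_S(w_0)=\ell_S(w_0')$, forcing $u_1=u_2=1$ and $w_0'=w_0$. The main obstacle is the one-sided version together with the case analysis of part (2): the delicate points are invoking the exchange and deletion conditions with the correct reduced word and, above all, observing that a letter deleted from the $w_0$-part always leaves an element of the same coset (resp.\ double coset), which is precisely what collides with the minimality of $w_0$; the remaining points are routine bookkeeping.
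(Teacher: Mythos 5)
Your argument is correct, but note that the paper does not prove this lemma at all: it is stated with attribution to the exercises of Chapter 4 of Bourbaki \cite{Bourb1} (see also Davis \cite{Davis1}, Section 4.3) and used as a known fact. What you have written is a self-contained proof of that classical result by the standard route: the one-sided coset statement via the exchange condition, part (3) by observing that a double-coset-minimal element is minimal in each one-sided coset it generates, part (2) by choosing a factorization $v=u_1w_0u_2$ minimizing $\ell_S(u_1)+\ell_S(u_2)$ and ruling out every placement of the two letters removed by the deletion condition, and part (1) from the additivity in part (2). The case analysis is complete and each case genuinely yields the contradiction you claim; the one point worth making explicit is that when a letter is deleted from the $w_0$-block, the element represented by the truncated $w_0$-word still lies in $W_X\,w\,W_Y$ (being $(u_1')^{-1}v\,u_2^{-1}$ with $u_1'\in W_X$, $u_2\in W_Y$), which you do flag as the delicate point. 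You also correctly invoke the auxiliary fact that $\ell_{S_Z}$ agrees with $\ell_S$ on $W_Z$ so that reduced expressions of elements of $W_Z$ use only letters of $S_Z$. In short: the paper buys brevity by citing Bourbaki; your proof buys self-containedness at the cost of a routine but somewhat lengthy exchange/deletion bookkeeping, and it is sound.
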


Let $X, Y \subset V(\Gamma)$ and $w_0 \in W$.
We say that $w_0$ is \emph{$(X,Y)$-minimal} if it is of minimal length in the double-coset $W_X\, w_0\, W_Y$.

The first ingredient in the proof of Theorem \ref{thm1_1} is the following.

\begin{lem}\label{lem2_2}
Let $X, Y \subset V(\Gamma)$ and $w\in W$ such that $w W_Y w^{-1} \subset W_X$.
Then there exist $Y' \subset X$ and $\alpha\in A_X$ such that $\iota(w)\, A_Y\,\iota(w)^{-1} = \alpha A_{Y'} \alpha^{-1}$. In particular, $\iota(w)\, A_Y\,\iota(w)^{-1} \subset A_X$.
\end{lem}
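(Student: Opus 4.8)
The plan is to reduce to the case where $w$ is $(X,Y)$-minimal, prove the statement inside the Coxeter group $W$ — where conjugation by a minimal double-coset representative carries $S_Y$ \emph{into $S_X$}, not merely into $W_X$ — and then transfer this rigidity to $A$ using the fact that the section $\iota$ is multiplicative on length-additive products.

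First I would invoke Lemma \ref{lem2_1}(2) to write $w = u_1 w_0 u_2$ with $u_1 \in W_X$, $u_2 \in W_Y$, $w_0$ the $(X,Y)$-minimal element of $W_X\,w\,W_Y$, and $\ell_S(w) = \ell_S(u_1) + \ell_S(w_0) + \ell_S(u_2)$. Then all intermediate products are length-additive, so $\iota(w) = \iota(u_1)\,\iota(w_0)\,\iota(u_2)$ with $\iota(u_1) \in A_X$ and $\iota(u_2) \in A_Y$. Since $u_1 \in W_X$ and $u_2 \in W_Y$, the hypothesis $w W_Y w^{-1} \subseteq W_X$ descends to $w_0 W_Y w_0^{-1} \subseteq W_X$. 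As $\iota(u_2)\,A_Y\,\iota(u_2)^{-1} = A_Y$, we get $\iota(w)\,A_Y\,\iota(w)^{-1} = \iota(u_1)\big(\iota(w_0)\,A_Y\,\iota(w_0)^{-1}\big)\iota(u_1)^{-1}$, so it suffices to treat $w_0$ and then set $\alpha = \iota(u_1)$.

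The key point is that $w_0 s_y w_0^{-1} \in S_X$ for every $y \in Y$. Indeed, put $r = w_0 s_y w_0^{-1}$; it lies in $w_0 W_Y w_0^{-1} \subseteq W_X$. By $(X,Y)$-minimality (Lemma \ref{lem2_1}(3) applied on the right), $\ell_S(w_0 s_y) = \ell_S(w_0) + 1$, hence $\ell_S(r w_0) = \ell_S(w_0 s_y) = \ell_S(w_0) + 1$; but $r \in W_X$, so Lemma \ref{lem2_1}(3) applied on the left gives $\ell_S(r w_0) = \ell_S(r) + \ell_S(w_0)$. Therefore $\ell_S(r) = 1$, i.e. $r \in S \cap W_X = S_X$. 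Writing $r = s_{\phi(y)}$ defines an injection $\phi : Y \to X$; set $Y' = \phi(Y) \subseteq X$, so that $w_0 W_Y w_0^{-1} = W_{Y'}$. To lift this to $A$: from $w_0 s_y = s_{\phi(y)}\, w_0$ in $W$ together with $\ell_S(w_0 s_y) = \ell_S(w_0) + 1 = \ell_S(s_{\phi(y)}\,w_0)$, applying $\iota$ yields $\iota(w_0)\,\sigma_y = \iota(w_0 s_y) = \iota(s_{\phi(y)}\,w_0) = \sigma_{\phi(y)}\,\iota(w_0)$, that is $\iota(w_0)\,\sigma_y\,\iota(w_0)^{-1} = \sigma_{\phi(y)}$ for all $y \in Y$. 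Hence $\iota(w_0)\,A_Y\,\iota(w_0)^{-1} = A_{Y'}$, and with $\alpha = \iota(u_1) \in A_X$ we conclude $\iota(w)\,A_Y\,\iota(w)^{-1} = \alpha\,A_{Y'}\,\alpha^{-1}$ with $Y' \subseteq X$; the ``in particular'' clause is then immediate since $A_{Y'} \subseteq A_X$.

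The main obstacle is the rigidity statement ``$w_0 s_y w_0^{-1} \in S_X$'' rather than just ``$\in W_X$'': this is exactly what makes the passage from $W$ to $A$ possible, since $\iota$ only converts an identity in $W$ into an identity in $A$ when the relevant word lengths add. It is also the reason the reduction must use double-coset minimality and not merely the minimal representative of the coset $w W_Y$: one needs $W_X$ to control $w_0$ from the left as well, which is precisely what forces $\ell_S(r) = 1$.
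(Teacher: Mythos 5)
Your proposal is correct and follows essentially the same route as the paper's proof: the same decomposition $w = u_1 w_0 u_2$ via Lemma \ref{lem2_1}(2), the same length computation via Lemma \ref{lem2_1}(3) showing $w_0 s_y w_0^{-1}$ has length one and hence lies in $S_X$, and the same use of the length-additivity of $\iota$ to transfer the identity $w_0 s_y = s_{\phi(y)} w_0$ to $A$. No gaps.
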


\begin{proof}
Let $w_0$ be the element of minimal length in the double-coset $W_X\,w\,W_Y$.
By Lemma \ref{lem2_1} there exist $u_1 \in W_X$ and $u_2 \in W_Y$ such that $w = u_1 w_0 u_2$ and $\ell_S (w) = \ell_S (u_1) + \ell_S (w_0) + \ell_S (u_2)$.
Since $w W_Y w^{-1} \subset W_X$, $u_1 \in W_X$ and $u_2 \in W_Y$, we have $w_0 W_Y w_0^{-1} \subset W_X$.

Let $y \in Y$, and let $\psi(y) = w_0 s_y w_0^{-1} \in W_X$.
We have that $w_0 s_y = \psi(y)\, w_0$.
Furthermore, by Lemma \ref{lem2_1}\,(3), we have $\ell_S(w_0)+1 = \ell_S(w_0s_y) = \ell_S(\psi(y)\,w_0) = \ell_S(\psi(y)) + \ell_S (w_0)$, and hence $\ell_S(\psi(y))=1$.
So, there exists $f(y) \in X$ such that $w_0 s_y w_0^{-1} = \psi(y) =  s_{f(y)}$.
Note that the above defined map $f : Y \to X$ is injective since conjugation by $w_0$ is an automorphism. 
We set $Y' = f(Y) \subset X$.

Let $y \in Y$.
We have $w_0 s_y = s_{f(y)} w_0$ and  $\ell_S(w_0 s_y) = \ell_S(s_{f(y)}w_0) = \ell_S(w_0)+1$, hence
\[
\iota(w_0)\,\sigma_y = 
\iota(w_0)\,\iota(s_y) =
\iota(w_0s_y) =
\iota (s_{f(y)}w_0) =
\iota (s_{f(y)})\,\iota(w_0) =
\sigma_{f(y)}\,\iota(w_0)\,.
\]
This implies that $\iota(w_0)\, \Sigma_Y\, \iota(w_0)^{-1} = \Sigma_{Y'}$, thus $\iota(w_0)\, A_Y\, \iota(w_0)^{-1} = A_{Y'}$.

We set $\alpha = \iota(u_1) \in A_X$.
Then, since $\iota(u_2) \in A_Y$,
\begin{gather*}
\iota(w)\,A_Y\,\iota(w)^{-1} =
\iota(u_1)\,\iota(w_0)\,\iota(u_2)\,A_Y\,\iota(u_2)^{-1} \iota(w_0)^{-1} \iota(u_1)^{-1} =\\
\iota(u_1)\,\iota(w_0)\,A_Y\,\iota(w_0)^{-1} \iota(u_1)^{-1} =
\iota(u_1)\,A_{Y'}\,\iota(u_1)^{-1} =
\alpha A_{Y'} \alpha^{-1}\,.
\end{gather*}
\end{proof}

We now turn to construct a set-retraction of the inclusion map from $A_X$ into $A$, that is, a map $\pi_X: A \to A_X$ which satisfies $\pi_X(\alpha) = \alpha$ for all $\alpha \in A_X$.
This map will be used to prove Lemma \ref{lem2_4} which is the second and last ingredient in the proof of Theorem \ref{thm1_1}.
Note that the main ideas of the proof of Proposition \ref{prop2_3} come from the proof of Theorem 1.2 of Charney--Paris \cite{ChaPar1}.

Recall that $(\Sigma \sqcup \Sigma^{-1})^*$ denotes the free monoid freely generated by $\Sigma \sqcup \Sigma^{-1}$, that is, the set of words over the alphabet $\Sigma \sqcup \Sigma^{-1}$.
Let $X \subset V(\Gamma)$.
Let $\hat \alpha =\sigma_{z_1}^{\varepsilon_1} \sigma_{z_2}^{\varepsilon_2} \cdots \sigma_{z_p}^{\varepsilon_p} \in (\Sigma \sqcup \Sigma^{-1})^*$.
We set $u_0 = 1 \in W$ and, for $i \in \{1, \dots, p\}$, we set $u_i = s_{z_1} s_{z_2} \cdots s_{z_i} \in W$.
We write each $u_i$ in the form $u_i = v_i w_i$ where $v_i \in W_X$ and $w_i$ is $(X, \emptyset)$-minimal.
Let $i \in \{1, \dots, p\}$.
We set $t_i=w_{i-1} s_{z_i} w_{i-1}^{-1}$ if $\varepsilon_i=1$ and $t_i=w_i s_{z_i} w_i^{-1}$ if $\varepsilon_i=-1$.
If $t_i \not\in S_X$, then we set $\tau_i=1$.
Suppose that $t_i \in S_X$, and let $x_i \in X$ such that $t_i = s_{x_i}$.
Then we set $\tau_i = \sigma_{x_i}^{\varepsilon_i}$.
Finally, we set 
\[
\hat \pi_X (\hat \alpha) = \tau_1 \tau_2 \cdots \tau_p \in (\Sigma_X \sqcup \Sigma_X^{-1})^*\,.
\]
While the definition of $\hat \pi_X$ may seem ad hoc at first, it will become clear in the proof of the following proposition.

\begin{prop}\label{prop2_3}
Let $X \subset V(\Gamma)$.
\begin{itemize}
\item[(1)]
Let $\hat \alpha, \hat \beta \in (\Sigma \sqcup \Sigma^{-1})^*$.
If $\hat \alpha$ and $\hat \beta$ represent the same element of $A$, then $\hat \pi_X (\hat \alpha)$ and $\hat \pi_X (\hat \beta)$ represent the same element of $A_X$.
In other words, the map $\hat \pi_X : (\Sigma \sqcup \Sigma^{-1})^* \to (\Sigma_X \sqcup \Sigma_X^{-1})^*$ induces a set-map $\pi_X : A \to A_X$.
\item[(2)]
We have $\pi_X(\alpha) = \alpha$ for all $\alpha \in A_X$.
\item[(3)]
The restriction of $\pi_X$ to $\CA$ is a homomorphism $\pi_X : \CA \to \CA_X$.
\end{itemize}
\end{prop}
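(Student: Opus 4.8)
The core of the proposition is part (1): that $\hat\pi_X$ descends to a well-defined set-map $A \to A_X$. Parts (2) and (3) are comparatively short corollaries, so I would organize the proof around (1), then harvest (2) and (3) at the end. The strategy for (1) is to show $\hat\pi_X$ is invariant under the defining relations of $A$, but since $\hat\pi_X$ is defined via the whole sequence of prefixes rather than locally, the right move is to check invariance under elementary moves on words: (i) free reductions $\sigma_z^{\varepsilon}\sigma_z^{-\varepsilon} \to \emptyset$ and their inverses, and (ii) replacement of $\Prod(\sigma_x,\sigma_y,m(e))$ by $\Prod(\sigma_y,\sigma_x,m(e))$ in a subword. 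Because any two words representing the same element of $A$ are connected by a finite chain of such moves, invariance under each move gives the result. The key bookkeeping observation that makes this tractable is that the sequence $(u_i)$ of Coxeter-group prefixes, hence the decompositions $u_i = v_i w_i$ with $w_i$ the $(X,\emptyset)$-minimal representative, is itself computed by a known "reduction to the parabolic" process, and the letters $\tau_i$ record exactly the steps of that process that land in $S_X$.

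For a free insertion/deletion at position $i$, note that inserting $\sigma_z^{\varepsilon}\sigma_z^{-\varepsilon}$ leaves all prefixes $u_j$ for $j$ outside the inserted block unchanged, and inside the block the two new prefixes $u$ and $us_z$ appear, producing two new letters $\tau,\tau'$. I would show that whatever the value of $w$ and $s = ws_z w^{-1}$: if $s \notin S_X$ then $\tau = \tau' = 1$, contributing nothing; if $s = s_x \in S_X$ then one obtains $\tau\tau' = \sigma_x^{\varepsilon}\sigma_x^{-\varepsilon}$ or $\sigma_x^{-\varepsilon}\sigma_x^{\varepsilon}$ (depending on $\varepsilon$), which freely reduces in $(\Sigma_X\sqcup\Sigma_X^{-1})^*$. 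The slightly delicate point here is to verify that the $(X,\emptyset)$-minimal representative is the same whether you read the pair as "multiply by $s_z$ then by $s_z^{-1}$" — i.e. that the recipe using $w_{i-1}$ for $\varepsilon=1$ and $w_i$ for $\varepsilon=-1$ is exactly what makes the two contributions cancel — and this is precisely the reason for the asymmetric definition of $t_i$.

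For a braid move replacing $\Prod(\sigma_x,\sigma_y,m)$ by $\Prod(\sigma_y,\sigma_x,m)$ over the subword occupying positions $i+1,\dots,i+m$: all prefixes $u_j$ with $j\le i$ or $j\ge i+m$ are unchanged (the two versions of the braid word represent the same Coxeter element), so only the $m$ "interior" letters $\tau_{i+1},\dots,\tau_{i+m}$ can change. The claim to prove is that the product of these $m$ letters, as an element of $A_X$, is the same before and after. I expect this to be the main obstacle. The cleanest approach I foresee is to run the same analysis that appears in Charney--Paris \cite{ChaPar1}: write $u_i = v\,w$ with $w$ the $(X,\emptyset)$-minimal part, and analyze the $(W_X, W_{\{x,y\}})$-double coset of $w$ using Lemma \ref{lem2_1}, more precisely the structure theory of minimal double coset representatives together with Deodhar's description of the parabolic case; this splits into the case where the interior contributes nothing (the relevant rank-two parabolic $wW_{\{x,y\}}w^{-1}$ meets $W_X$ trivially in the relevant positions, so all $\tau$'s are $1$ on both sides) and the case where it contributes a braid relation $\Prod(\sigma_{x'},\sigma_{y'},m)=\Prod(\sigma_{y'},\sigma_{x'},m)$ inside $A_X$ — possibly a shorter one when $m(e)$ drops, but still a defining relation of $A_X$ or a consequence of free reduction. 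Getting the case division exactly right, and handling the exponents $\varepsilon_i$ consistently through the braid move, is where the real work lies.

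Once (1) is in hand, (2) is immediate: if $\hat\alpha \in (\Sigma_X\sqcup\Sigma_X^{-1})^*$ then every prefix $u_i$ already lies in $W_X$, so $v_i = u_i$, $w_i = 1$, hence $t_i = s_{z_i} \in S_X$ and $\tau_i = \sigma_{z_i}^{\varepsilon_i}$, giving $\hat\pi_X(\hat\alpha) = \hat\alpha$; thus $\pi_X$ restricts to the identity on $A_X$, and since $\pi_X$ is a retraction onto $A_X$ its image is all of $A_X$. For (3), one restricts to $\hat\alpha \in (\Sigma\sqcup\Sigma^{-1})^*$ representing elements of $\CA = \ker\theta$; one checks that the decompositions $u_i = v_iw_i$ behave multiplicatively under concatenation of such words (the prefixes of $\hat\alpha\hat\beta$ are the prefixes of $\hat\alpha$ followed by $\theta(\alpha)$-translates of the prefixes of $\hat\beta$, and since $\theta(\alpha)=1$ there is no translation), so $\hat\pi_X(\hat\alpha\hat\beta) = \hat\pi_X(\hat\alpha)\,\hat\pi_X(\hat\beta)$, and $\pi_X(\CA)\subset A_X$ together with $\theta\circ\pi_X = \theta$ on $\CA$ (which follows letter by letter) forces the image to land in $\CA_X = \CA\cap A_X$. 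The one thing to be careful about in (3) is that homomorphism-ness is checked at the level of the monoid map $\hat\pi_X$ and then transported along part (1).
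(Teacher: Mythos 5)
Your overall strategy---checking invariance of $\hat\pi_X$ under free insertions/deletions and under braid moves---is a legitimate, genuinely different route from the paper's, which is topological: the paper identifies $\hat\pi_X(\hat\alpha)$ with the edge-path $\rho_X(\gamma(\hat\alpha))$ in $\Sal(\Gamma_X)$, where $\rho_X:\Sal(\Gamma)\to\Sal(\Gamma_X)$ is the retraction of Godelle--Paris and $\gamma(\hat\alpha)$ is the lift of the loop $\bar\gamma(\hat\alpha)$ to the covering $\Sal(\Gamma)\to\SSal(\Gamma)$; well-definedness is then immediate from covering-space theory and the continuity of $\rho_X$, part (2) is the same computation you give, and part (3) is the observation that $\pi_X|_{\CA}$ is the induced map $\rho_{X,*}$ on $\pi_1$. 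Your treatment of free reduction is correct (the asymmetric use of $w_{i-1}$ versus $w_i$ does make the two inserted letters produce $t=t'$, hence cancelling $\tau$'s), and your arguments for (2) and (3) are essentially sound.

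However, there is a genuine gap at the heart of part (1): the braid-move case. You correctly identify it as ``the main obstacle'' and ``where the real work lies,'' but you do not carry it out; you only announce a case division (the conjugate rank-two parabolic $wW_{\{x,y\}}w^{-1}$ either misses $S_X$ in the relevant positions, or contributes a defining relation of $A_X$) without stating or proving it. This claim is precisely the assertion that the retraction $\rho_X$ extends over the $2$-cells of the Salvetti complex, i.e.\ the substance of Theorem 2.2 of Godelle--Paris, and it is not a routine verification: one must analyze how the $(X,\emptyset)$-minimal parts $w_i$ of the prefixes evolve across the $m$ letters of $\Prod(s_x,s_y,m)$, show that the positions $i$ with $t_i\in S_X$ are governed by a sub-relation of possibly smaller length, and track that the resulting word in $(\Sigma_X\sqcup\Sigma_X^{-1})^*$ changes by a relation valid in $A_X$. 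As written, your proof of the central statement reduces to ``run the analysis of Charney--Paris,'' which is exactly the content being proved. Either you must import the Godelle--Paris retraction explicitly (as the paper does, at which point the topological argument is shorter and you should use it), or you must actually execute the double-coset case analysis for the braid move, which is a substantial piece of combinatorics that is missing here.
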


\begin{proof}
The \emph{Salvetti complex} of $\Gamma$ is a CW-complex $\SSal(\Gamma)$ whose $2$-skeleton coincides with the $2$-complex associated with the standard presentation of $A$ (see Godelle--Paris \cite{GodPar1}, Paris \cite{Paris2}, Salvetti \cite{Salve1} or Charney--Davis \cite{ChaDav1}).
In particular, $\SSal(\Gamma)$ has a unique vertex $o_0$, and it has one edge $\bar a_x$ for each $x \in V(\Gamma)$.
We also have an isomorphism $A \to \pi_1(\SSal(\Gamma))$ which sends $\sigma_x$ to the homotopy class of $\bar a_x$ for all $x \in V(\Gamma)$.
Let $p: \Sal (\Gamma) \to \SSal(\Gamma)$ be the regular covering associated with $\theta : A \to W$.
The set of vertices of $\Sal (\Gamma)$ is a set $\{ o(u) \mid u \in W \}$ in one-to-one correspondence with $W$ and the set of edges is a set $\{ a_x(u) \mid x \in V(\Gamma)\,,\ u \in W\}$ in one-to-one correspondence with $V(\Gamma) \times W$.
An edge $a_x(u)$ connects $o(u)$ with $o(us_x)$, and it is assumed to be oriented from $o(u)$ to $o(us_x)$.
We have $p(o(u))=o_0$ for all $u \in W$ and $p(a_x(u))=\bar a_x$ for all $(x,u) \in V(\Gamma) \times W$.
We have an action of $W$ on $\Sal (\Gamma)$ and $\Sal(\Gamma) / W = \SSal (\Gamma)$.
This action is defined on the vertices and edges as follows:
\[
v\,o(u)=o(vu)\,,\ v\,a_x(u) = a_x(vu)\,.
\]

Let $X \subset V(\Gamma)$.
We have an embedding $\bar \nu_X : \SSal(\Gamma_X) \to  \SSal(\Gamma)$ which sends $\bar a_x$ to $\bar a_x$ for all $x \in X$ and which induces the natural embedding of $A_X$ into $A$.
We also have an embedding $\nu_X : \Sal (\Gamma_X) \to \Sal(\Gamma)$ which sends $o(u)$ to $o(u)$ for all $u \in W_X$, which sends $a_x(u)$ to $a_x(u)$ for all $(x,u) \in X \times W_X$, and which induces the natural embedding of $\CA_X$ into $\CA$.
These two embeddings are linked with the following commutative diagram:
\[
\xymatrix{
\Sal(\Gamma_X) \ar[r]^{\nu_X} \ar[d]^{p} & \Sal(\Gamma) \ar[d]^p\\
\SSal(\Gamma_X) \ar[r]^{\bar \nu_X} & \SSal(\Gamma)}
\]

We know by Godelle--Paris \cite[Theorem 2.2]{GodPar1} that the embedding $\nu_X : \Sal(\Gamma_X) \to \Sal(\Gamma)$ admits a retraction $\rho_X : \Sal(\Gamma) \to \Sal(\Gamma_X)$.
This retraction is cellular in the sense that it sends the $k$-skeleton of $\Sal (\Gamma)$ to the $k$-skeleton of $\Sal(\Gamma_X)$ for all $k \ge 0$.
The following explicit description of $\rho_X$ on the $0$ and $1$-skeletons of $\Sal (\Gamma)$ is proved in Charney--Paris \cite[Lemma 2.6]{ChaPar1}. 
Let $u \in W$ and $z \in V(\Gamma)$.
We write $u$ in the form $u=vw$ where $v \in W_X$ and $w$ is $(X,\emptyset)$-minimal.
\begin{itemize}
\item
$\rho_X(o(u)) = o(v)$.
\item
If $w s_z w^{-1} \not\in S_X$, then $\rho_X(a_z(u)) = o(v)$.
\item
Suppose that $w s_z w^{-1} \in S_X$.
Let $x \in X$ such that $w s_z w^{-1} = s_x$.
Then $\rho_X(a_z(u)) = a_x (v)$.
\end{itemize}

In what follows we compose paths from left to right.
Let $\hat \alpha = \sigma_{z_1}^{\varepsilon_1} \sigma_{z_2}^{\varepsilon_2} \cdots \sigma_{z_p}^{\varepsilon_p} \in (\Sigma \sqcup \Sigma^{-1})^*$.
Let 
\[
\bar \gamma (\hat \alpha) = \bar a_{z_1}^{\varepsilon_1} \bar a_{z_2}^{\varepsilon_2} \cdots \bar a_{z_p}^{\varepsilon_p}\,.
\]
We see that, if $\alpha$ is the element of $A$ represented by $\hat \alpha$, then $\alpha$, regarded as an element of $\pi_1(\SSal(\Gamma)) = A$,  is represented by the loop $\bar \gamma (\hat \alpha)$. 
Let $\gamma (\hat \alpha)$ be the lift of $\bar \gamma(\hat \alpha)$ in $\Sal(\Gamma)$ starting at $o(1)$.
We set $u_0 = 1 \in W$ and, for $i \in \{1, \dots, p\}$, we set $u_i = s_{z_1} s_{z_2} \cdots s_{z_i} \in W$.
For $i\in \{1, \dots, p\}$ we set $a_i=a_{z_i}(u_{i-1})$ if $\varepsilon_i=1$, and $a_i = a_{z_i}(u_i)$ if $\varepsilon_i=-1$.
Then 
\[
\gamma (\hat \alpha) = a_1^{\varepsilon_1} a_2^{\varepsilon_2} \cdots a_p^{\varepsilon_p}\,.
\]
Let $\gamma_X (\hat \alpha) = \rho_X(\gamma (\hat \alpha))$.
We write each $u_i$ in the form $u_i = v_i w_i$ where $v_i \in W_X$ and $w_i$ is $(X, \emptyset)$-minimal.
Let $i \in \{1, \dots, p\}$.
We set $t_i=w_{i-1} s_{z_i} w_{i-1}^{-1}$ if $\varepsilon_i=1$, and $t_i=w_i s_{z_i} w_i^{-1}$ if $\varepsilon_i=-1$.
If $t_i \not\in S_X$, then, as shown in Charney--Paris \cite[Lemma 2.6]{ChaPar1}, $v_i=v_{i-1}$.
In that case we denote by $b_i$ the constant path at $o(v_{i-1}) = o(v_i)$.
Suppose that $t_i \in S_X$.
Let $x_i \in X$ such that $t_i = s_{x_i}$.
We set $b_i = a_{x_i}(v_{i-1})$ if $\varepsilon_i=1$, and $b_i = a_{x_i}(v_i)^{-1}$  if $\varepsilon_i=-1$.
It follows from the description of the map $\rho_X$ on the $0$ and $1$-skeletons given above that
\[
\gamma_X (\hat \alpha) = b_1 b_2 \cdots b_p\,.
\]
Let $\bar \gamma_X(\hat \alpha) = p(\gamma_X (\hat \alpha))$.
Let $i \in \{1, \dots, p\}$.
If $t_i \not \in S_X$, then we denote by $\bar b_i$ the constant loop in $\SSal(\Gamma_X)$ based at $o_0$.
Suppose $t_i \in S_X$.
Let $x_i \in X$ such that $t_i = s_{x_i}$ as before.
We set $\bar b_i = \bar a_{x_i}$ if $\varepsilon_i=1$, and $\bar b_i = \bar a_{x_i}^{-1}$ if $\varepsilon_i=-1$.
Then
\[
\bar \gamma_X (\hat \alpha) = \bar b_1 \bar b_2 \cdots \bar b_p\,.
\]
Let $\alpha' \in A_X = \pi_1 (\SSal (\Gamma_X))$ be the element represented by the loop $\bar \gamma_X (\hat \alpha)$.
Then we easily see that $\alpha'$ is exactly the element of $A_X$ represented by the word $\hat \pi_X(\hat \alpha) \in (\Sigma_X \sqcup \Sigma_X^{-1})^*$.

{\it Proof of Part (1).}
Let $\hat \alpha, \hat \beta \in (\Sigma \sqcup \Sigma^{-1})^*$ be two words that represent the same element of $A$. 
Then $\bar \gamma(\hat \alpha)$ and $\bar \gamma( \hat \beta)$ represent the same element of $A = \pi_1 (\SSal (\Gamma))$, hence $\bar \gamma (\hat \alpha)$ and $\bar \gamma (\hat \beta)$ are homotopic loops. 
Since $p : \Sal (\Gamma) \to \SSal(\Gamma)$ is a covering map, $\gamma (\hat \alpha)$ and $\gamma (\hat \beta)$ are homotopic relative to the extremities.
Since $\rho_X$ is continuous, it follows that $\gamma_X(\hat \alpha)$ and $\gamma_X (\hat \beta)$ are also homotopic relative to the extremities. 
Again, the map $p : \Sal(\Gamma_X) \to \SSal (\Gamma_X)$ is continuous, hence $\bar \gamma_X (\hat \alpha)$ and $\bar \gamma_X (\hat \beta)$ are homotopic loops, and therefore they represent the same element of $A_X = \pi_1 (\SSal (\Gamma_X))$.
We conclude that $\hat \pi_X (\hat \alpha)$ and $\hat \pi_X (\hat \beta)$ represent the same element of $A_X$.

{\it Proof of Part (2).}
Let $\alpha \in A_X$.
We choose a word $\hat \alpha = \sigma_{x_1}^{\varepsilon_1} \sigma_{x_2}^{\varepsilon_2} \cdots \sigma_{x_p}^{\varepsilon_p} \in (\Sigma_X \sqcup \Sigma_X^{-1})^*$ which represents $\alpha$.
Following the above definition, we set $u_0 = 1$ and, for $i \in \{1, \dots, p\}$, we set $u_i = s_{x_1} s_{x_2} \cdots s_{x_i}$.
We write each $u_i$ in the form $u_i = v_i w_i$ where $v_i \in W_X$ and $w_i$ is $(X, \emptyset)$-minimal.
Note that $u_i \in W_X$, hence $v_i = u_i$ and $w_i = 1$.
Let $i \in \{1, \dots, p\}$.
We set $t_i=w_{i-1} s_{x_i} w_{i-1}^{-1}$ if $\varepsilon_i=1$, and $t_i=w_i s_{x_i} w_i^{-1}$ if $\varepsilon_i=-1$.
In both cases we have $t_i = s_{x_i}$, and so $\tau_i = \sigma_{x_i}^{\varepsilon_i}$.
So, 
\[
\hat \pi_X (\hat \alpha) = \tau_1 \tau_2 \cdots \tau_p 
= \sigma_{x_1}^{\varepsilon_1} \sigma_{x_2}^{\varepsilon_2} \cdots \sigma_{x_p}^{\varepsilon_p} = \hat \alpha\,,
\]
hence $\pi_X(\alpha) = \alpha$.

{\it Proof of Part (3).}
Observe that the restriction of $\pi_X$ to $\CA$ coincides with the homomorphism $\rho_{X,*} : \CA = \pi_1 (\Sal(\Gamma)) \to \pi_1 (\Sal(\Gamma_X))= \CA_X$ induced by the map $\rho_X : \Sal(\Gamma) \to \Sal(\Gamma_X)$.
To see this, note that $\rho_X$ does to edge paths in $\Sal(\Gamma)$ what $\hat \pi_X$ does to elements in $(\Sigma \sqcup \Sigma^{-1})^*$ (where the $\varepsilon$ appearing in the definition of $\hat \pi_X$ reflect the orientation of the edges in $\Sal(\Gamma)$).
Hence, the restriction of $\pi_X$ to $\CA$ is a homomorphism $\pi_X : \CA \to \CA_X$.
\end{proof}

Now, thanks to Proposition \ref{prop2_3} we can prove the second ingredient of the proof of Theorem \ref{thm1_1}.

\begin{lem}\label{lem2_4}
Let $X \subset V(\Gamma)$, $\alpha \in A_X$ and $\beta \in \CA$.
If $\beta \alpha \beta^{-1} \in A_X$, then $\beta \alpha \beta^{-1} = \pi_X(\beta)\, \alpha\, \pi_X(\beta)^{-1}$.
\end{lem}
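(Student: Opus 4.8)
The plan is to compute $\pi_X(\beta\alpha\beta^{-1})$ in two ways and compare. On one side, the hypothesis $\beta\alpha\beta^{-1}\in A_X$ together with Proposition~\ref{prop2_3}\,(2) gives at once $\pi_X(\beta\alpha\beta^{-1})=\beta\alpha\beta^{-1}$. On the other side, I will prove a \emph{partial multiplicativity}, namely $\pi_X(\beta\alpha\beta^{-1})=\pi_X(\beta)\,\alpha\,\pi_X(\beta)^{-1}$; combining the two equalities is then the whole proof. Since $\pi_X$ is not multiplicative in general, the point is to exploit heavily that $\beta$ is colored and that $\alpha$ lies in $A_X$.

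To obtain the partial multiplicativity I would work inside the Salvetti complexes, keeping all the notation from the proof of Proposition~\ref{prop2_3}. Choose a word $\hat\beta\in(\Sigma\sqcup\Sigma^{-1})^*$ representing $\beta$ and a word $\hat\alpha\in(\Sigma_X\sqcup\Sigma_X^{-1})^*$ representing $\alpha$, and form $\hat\beta\,\hat\alpha\,\hat\beta^{-1}$, where $\hat\beta^{-1}$ is the formal inverse word; this represents $\beta\alpha\beta^{-1}$. Lift the loop $\bar\gamma(\hat\beta\,\hat\alpha\,\hat\beta^{-1})$ to a path in $\Sal(\Gamma)$ starting at $o(1)$, and split it into the three consecutive sub-paths coming from the three blocks of the word. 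I would then use: (i) because $\beta\in\CA$ we have $\theta(\beta)=1$, so the first sub-path is the loop $\gamma(\hat\beta)$ at $o(1)$, the middle sub-path runs from $o(1)$ to $o(\theta(\alpha))$ with $\theta(\alpha)\in W_X$, and the last sub-path is the $\theta(\alpha)$-translate of $\gamma(\hat\beta^{-1})$, based at $o(\theta(\alpha))$; (ii) because $\alpha\in A_X$, the middle sub-path is the image under $\nu_X$ of the corresponding path in $\Sal(\Gamma_X)$, hence is fixed by $\rho_X$; (iii) $\rho_X$ is $W_X$-equivariant --- which one reads off from the explicit description of $\rho_X$ on the $0$- and $1$-skeleton recalled in the proof of Proposition~\ref{prop2_3} --- and the covering projection $p$ is $W$-invariant. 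Applying $\rho_X$ and then $p$ to the three sub-paths, the loop $\bar\gamma_X(\hat\beta\,\hat\alpha\,\hat\beta^{-1})$ in $\SSal(\Gamma_X)$ becomes the concatenation of $\bar\gamma_X(\hat\beta)$, the loop in $\SSal(\Gamma_X)$ determined by $\hat\alpha$, and $\bar\gamma_X(\hat\beta^{-1})$; reading this in $\pi_1(\SSal(\Gamma_X))=A_X$ yields $\pi_X(\beta\alpha\beta^{-1})=\pi_X(\beta)\,\alpha\,\pi_X(\beta^{-1})$, and Proposition~\ref{prop2_3}\,(3) lets me rewrite $\pi_X(\beta^{-1})$ as $\pi_X(\beta)^{-1}$.

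The step I expect to be the main obstacle is the bookkeeping behind~(iii) for the last block: one must check that, after retracting, the portion of the lifted path coming from $\hat\beta^{-1}$ really equals $\gamma_X(\hat\beta^{-1})$ up to the harmless left translation by $\theta(\alpha)$. In terms of the construction of $\hat\pi_X$, writing $u_i=v_iw_i$ with $v_i\in W_X$ and $w_i$ being $(X,\emptyset)$-minimal, one needs that the minimal parts $w_i$ reset to $1$ once the $\hat\beta$-block has been read (using $\theta(\beta)=1$), remain equal to $1$ throughout the $\hat\alpha$-block (using $\alpha\in A_X$), and, during the $\hat\beta^{-1}$-block, coincide with those produced when $\hat\pi_X$ is run on $\hat\beta^{-1}$ alone --- the relevant prefixes differ only by the left factor $\theta(\alpha)\in W_X$, which is absorbed into the parts $v_i$. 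With this in hand one actually gets the stronger, purely combinatorial identity $\hat\pi_X(\hat\beta\,\hat\alpha\,\hat\beta^{-1})=\hat\pi_X(\hat\beta)\cdot\hat\alpha\cdot\hat\pi_X(\hat\beta^{-1})$ as words, giving an alternative route that avoids the topology altogether.
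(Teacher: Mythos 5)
Your proposal is correct and, in its combinatorial form, is essentially the paper's own proof: the paper runs $\hat\pi_X$ on the concatenated word $\hat\beta\,\hat\alpha\,\hat\beta^{-1}$ and verifies exactly the three facts you isolate (the minimal parts are trivial throughout the $\hat\alpha$-block since $\theta(\beta)=1$, and the minimal parts in the $\hat\beta^{-1}$-block agree with those of the $\hat\beta$-block because the extra left factor $\theta(\alpha)\in W_X$ is absorbed into the $v_i$), then concludes by comparing with $\pi_X(\beta\alpha\beta^{-1})=\beta\alpha\beta^{-1}$ from Proposition~\ref{prop2_3}\,(2). The only cosmetic difference is that the paper shows directly that the third block contributes $\tau_{p,1}^{-1}\cdots\tau_{1,1}^{-1}=\pi_X(\beta)^{-1}$, rather than identifying it with $\hat\pi_X(\hat\beta^{-1})$ and invoking Proposition~\ref{prop2_3}\,(3) to rewrite $\pi_X(\beta^{-1})$ as $\pi_X(\beta)^{-1}$.
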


\begin{proof}
We assume that $\beta \alpha \beta^{-1} \in A_X$.
We choose a word $\sigma_{z_1}^{\varepsilon_1} \sigma_{z_2}^{\varepsilon_2} \cdots \sigma_{z_p}^{\varepsilon_p} \in (\Sigma \sqcup \Sigma^{-1})^*$ which represents $\beta$ and a word $\sigma_{x_1}^{\mu_1} \sigma_{x_2}^{\mu_2} \cdots \sigma_{x_q}^{\mu_q} \in (\Sigma_X \sqcup \Sigma_X^{-1})^*$ which represents $\alpha$.
We start with the definition of $\pi_X(\beta \alpha \beta^{-1})$ which uses the representative word $\sigma_{z_1}^{\varepsilon_1} \cdots \sigma_{z_p}^{\varepsilon_p} \sigma_{x_1}^{\mu_1} \cdots \sigma_{x_q}^{\mu_q} \sigma_{z_p}^{-\varepsilon_p} \cdots \sigma_{z_1}^{-\varepsilon_1}$.
We set $u_{0,1}=1$ and, for $i \in \{1, \dots, p\}$, we set $u_{i,1}=s_{z_1} s_{z_2} \cdots s_{z_i}$.
We write each $u_{i,1}$ in the form $u_{i,1} = v_{i,1} w_{i,1}$ where $v_{i,1} \in W_X$ and $w_{i,1}$ is $(X, \emptyset)$-minimal.
Let $i\in\{1,\dots,p\}$.
We set $t_{i,1}=w_{i-1,1} s_{z_i} w_{i-1,1}^{-1}$ if $\varepsilon_i=1$, and $t_{i,1}=w_{i,1} s_{z_i} w_{i,1}^{-1}$ if $\varepsilon_i=-1$.
We set $\tau_{i,1} = 1$ if $t_{i,1} \not\in S_X$, and $\tau_{i,1}=\sigma_{x_{i,1}}^{\varepsilon_i}$ if $t_{i,1}\in S_X$, where $x_{i,1}$ is the element of $X$ such that $t_{i,1} = s_{x_{i,1}}$.
We set $u_{0,2}=\theta(\beta)$ and, for $i \in \{1, \dots, q\}$, we set $u_{i,2}=\theta(\beta)\,s_{x_1} s_{x_2} \cdots s_{x_i}$.
We write each $u_{i,2}$ in the form $u_{i,2} = v_{i,2} w_{i,2}$, where $v_{i,2} \in W_X$ and $w_{i,2}$ is $(X, \emptyset)$-minimal.
Let $i \in \{1,\dots,q\}$.
We set $t_{i,2}=w_{i-1,2} s_{x_i} w_{i-1,2}^{-1}$ if $\mu_i=1$, and $t_{i,2}=w_{i,2} s_{x_i} w_{i,2}^{-1}$ if $\mu_i=-1$.
We set $\tau_{i,2} = 1$ if $t_{i,2} \not\in S_X$, and $\tau_{i,2}=\sigma_{x_{i,2}}^{\mu_i}$ if  $t_{i,2}\in S_X$, where $x_{i,2}$ is the element of $X$ such that $t_{i,2} = s_{x_{i,2}}$.
We set $u_{p+1,3}=\theta (\beta)\, \theta(\alpha)$ and, for $i \in \{1, \dots, p\}$, we set $u_{i,3}= \theta(\beta)\, \theta(\alpha)\, s_{z_p} s_{z_{p-1}} \cdots s_{z_i}$.
We write each $u_{i,3}$ in the form $u_{i,3} = v_{i,3} w_{i,3}$, where $v_{i,3} \in W_X$ and $w_{i,3}$ is $(X, \emptyset)$-minimal.
Let $i \in \{1,\dots, p\}$.
We set $t_{i,3}=w_{i+1,3} s_{z_i} w_{i+1,3}^{-1}$ if $\varepsilon_i=-1$, and $t_{i,3}=w_{i,3} s_{z_i} w_{i,3}^{-1}$ if $\varepsilon_i=1$.
We set $\tau_{i,3} = 1$ if $t_{i,3} \not\in S_X$, and $\tau_{i,3}=\sigma_{x_{i,3}}^{-\varepsilon_i}$ if $t_{i,3}\in S_X$, where $x_{i,3}$ is the element of $X$ such that $t_{i,3} = s_{x_{i,3}}$.
Then, by definition, 
\[
\pi_X(\beta \alpha \beta^{-1}) = \tau_{1,1} \tau_{2,1} \cdots \tau_{p,1} \tau_{1,2} \tau_{2,2} \cdots \tau_{q,2} \tau_{p,3} \cdots \tau_{2,3} \tau_{1,3}\,.
\]
We also have $\pi_X(\beta \alpha \beta^{-1}) = \beta \alpha \beta^{-1}$, since $\beta \alpha \beta^{-1} \in A_X$.

We have $\tau_{1,1} \tau_{2,1} \cdots \tau_{p,1} = \pi_X(\beta)$ by definition.
Let $i \in \{0,1, \dots, q\}$.
We have $\theta(\beta)=1$ since $\beta \in \CA$, hence $u_{i,2}=s_{x_1} s_{x_2} \cdots s_{x_i} \in W_X$.
It follows that $v_{i,2} = u_{i,2}$ and $w_{i,2}=1$.
Let $i \in \{1, \dots, q\}$.
Then $t_{i,2}=s_{x_i} \in S_X$ and $\tau_{i,2} = \sigma_{x_i}^{\mu_i}$.
So,
\[
\tau_{1,2} \tau_{2,2} \cdots \tau_{q,2} =
\sigma_{x_1}^{\mu_1} \sigma_{x_2}^{\mu_2} \cdots \sigma_{x_q}^{\mu_q} =
\alpha\,.
\]
Let $i \in \{0,1, \dots ,p\}$.
We have $1 = \theta (\beta) = s_{z_1} \cdots s_{z_i} s_{z_{i+1}} \cdots s_{z_p}$, hence $s_{z_p} \cdots s_{z_{i+1}} = s_{z_1} \cdots s_{z_i} = u_{i,1}$, and therefore 
\[
u_{i,3} = 
\theta(\beta)\,\theta(\alpha)\, s_{z_p} \cdots s_{z_i} =
\theta (\alpha)\, s_{z_1} \cdots s_{z_{i-1}} =
\theta(\alpha)\, u_{i-1,1} =
\theta(\alpha)\, v_{i-1,1} w_{i-1,1}\,.
\]
Since $\theta(\alpha) \in W_X$, it follows that $v_{i,3} = \theta(\alpha)\, v_{i-1,1}$ and $w_{i,3} = w_{i-1,1}$.
Let $i \in \{1, \dots, p\}$.
If $\varepsilon_i=1$, then  
\[
t_{i,3} = 
w_{i,3} s_{z_i} w_{i,3}^{-1} =
w_{i-1,1} s_{z_i} w_{i-1,1}^{-1} =
t_{i,1}\,.
\]
Similarly, if $\varepsilon_i=-1$, then  
\[
t_{i,3} = 
w_{i+1,3} s_{z_i} w_{i+1,3}^{-1} =
w_{i,1} s_{z_i} w_{i,1}^{-1} =
t_{i,1}\,.
\]
In both cases it follows that $\tau_{i,3} = \tau_{i,1}^{-1}$.
So,
\[
\tau_{p,3} \cdots \tau_{2,3} \tau_{1,3} =
\tau_{p,1}^{-1} \cdots \tau_{2,1}^{-1} \tau_{1,1}^{-1} =
\pi_X(\beta)^{-1}\,.
\]
Finally,
\[
\beta \alpha \beta ^{-1} =
\pi_X(\beta \alpha \beta ^{-1}) =
\pi_X(\beta)\, \alpha\, \pi_X(\beta)^{-1}\,.
\]
\end{proof}

\begin{proof}[Proof of Theorem \ref{thm1_1}]
Let $X,Y \subset V(\Gamma)$ and $\alpha \in A$ such that $\alpha A_Y \alpha^{-1} \subset A_X$.
Let $w = \theta(\alpha)$.
We have $wW_Yw^{-1} \subset W_X$, hence, by Lemma \ref{lem2_2}, there exist $Y'\subset X$ and $\beta_2 \in A_X$ such that $\iota(w)\,A_Y\,\iota(w)^{-1} = \beta_2 A_{Y'} \beta_2^{-1}$.
Let $\beta_1=\alpha\,\iota(w)^{-1}$.
Then 
\[
\alpha A_Y \alpha^{-1} =
\alpha \, \iota(w)^{-1}\, \iota(w)\, A_Y\, \iota(w)^{-1}\, \iota(w)\, \alpha^{-1} =
\beta_1 \beta_2 A_{Y'} \beta_2^{-1} \beta_1^{-1}\,.
\]
We have $\beta_1 \in \CA$, since $\theta(\beta_1)=ww^{-1}=1$, $\beta_2 A_{Y'} \beta_2^{-1} \subset A_X$ and $\beta_1 (\beta_2 A_{Y'} \beta_2^{-1}) \beta_1^{-1} \subset A_X$, hence, by Lemma \ref{lem2_4},
\[
\alpha A_Y \alpha^{-1} =
\beta_1 (\beta_2 A_{Y'} \beta_2^{-1}) \beta_1^{-1} =
\pi_X(\beta_1)\, (\beta_2 A_{Y'} \beta_2^{-1})\, \pi_X(\beta_1)^{-1}\,.
\]
So, if $\gamma =  \pi_X(\beta_1)\,\beta_2$, then $\gamma \in A_X$ and $\alpha A_Y \alpha^{-1} = \gamma A_{Y'} \gamma^{-1}$.
\end{proof}



\end{document}